\newcommand{\bydef}{:=}
\newcommand{\id}{\mathrm{id}}%identity map
\newcommand{\cA}{\mathcal{A}}%algebras
\newcommand{\frg}{{\mathfrak g}}%Lie algebras
\newcommand{\ZZ}{\mathbb{Z}}
\newcommand{\QQ}{\mathbb{Q}}
\newcommand{\CC}{\mathbb{C}}
\newcommand{\FF}{\mathbb{F}}
\newcommand{\KK}{\mathbb{K}}
\DeclareMathOperator{\Grp}{\mathrm{Grp}}%category of groups
\DeclareMathOperator{\AlgF}{\mathrm{Alg_{\FF}}}%category of
\DeclareMathOperator{\AlgK}{\mathrm{Alg_{\KK}}}%category of
\DeclareMathOperator{\Hom}{\mathrm{Hom}}
\DeclareMathOperator{\Aut}{\mathrm{Aut}}%automorphism group
\DeclareMathOperator{\AAut}{\mathbf{Aut}}%automorphism group scheme
\DeclareMathOperator{\supp}{\mathrm{Supp}\,}
\newcommand{\frsl}{{\mathfrak{sl}}}
\newcommand{\fru}{{\mathfrak{u}}}
\newcommand{\Gs}{\mathbf{G}}
\newcommand{\Hs}{\mathbf{H}}
\newtheorem{theorem}{Theorem}
\newtheorem{proposition}[theorem]{Proposition}
\theoremstyle{definition}
\theoremstyle{remark}
\newtheorem{remark}[theorem]{Remark}
\begin{document}

\title[Gradings on algebras over algebraically closed fields]{Gradings on algebras\\ over algebraically closed fields}

\author[Alberto Elduque]{Alberto Elduque${}^\star$}
\address{Departamento de Matem\'{a}ticas
 e Instituto Universitario de Matem\'aticas y Aplicaciones,
 Universidad de Zaragoza, 50009 Zaragoza, Spain}
%\curraddr{}
\email{elduque@unizar.es}
\thanks{${}^\star$Supported by the Spanish Ministerio de Econom\'{\i}a y Competitividad---Fondo Europeo de Desarrollo Regional (FEDER) MTM2010-18370-C04-02 and by the Diputaci\'on General de Arag\'on---Fondo Social Europeo (Grupo de Investigaci\'on de \'Algebra)}

\subjclass[2010]{Primary 17B70; Secondary 16W50, 14L15}

\keywords{Grading; Margaux Theorem; automorphism group scheme; graded Lie algebra}

\date{}

\begin{abstract}
The classification, both up to isomorphism or up to equivalence, of the gradings on a finite dimensional nonassociative algebra $\cA$ over an algebraically closed field $\FF$ such that the group scheme of automorphisms $\AAut(\cA)$ is smooth is shown to be equivalent to the corresponding problem for $\cA_\KK=\cA\otimes_\FF\KK$ for any algebraically closed field extension $\KK$.
\end{abstract}

\maketitle

%------------------------

\section{Introduction}

Gradings on Lie algebras are ubiquitous, as shown in the introduction of \cite{LGI}. This paper by Patera and Zassenhaus started a systematic research on gradings by abelian groups on simple finite dimensional Lie algebras over algebraically closed fields of characteristic $0$. In the sequel \cite{LGII}, a description was given of the fine gradings on the simple classical Lie algebras, other than $D_4$, over the field of complex numbers $\CC$. The classification of the fine gradings on the classical simple Lie algebras, including $D_4$, over any algebraically closed field of characteristic $0$ was achieved in \cite{EldFine}. A complete account of these results and of the state of the art appears in \cite{EK}.

For the exceptional simple Lie algebras, the gradings on $G_2$ and $F_4$ are related (see \cite{EK} and references there in) to gradings on the octonions and on the Albert algebra (the exceptional simple Jordan algebra). For $E_6$, the classification of fine gradings is achieved in \cite{DV}. At certain point, Draper and Viruel make use of known results over the complex numbers, where the corresponding simple Lie groups have been thoroughly studied, and transfer these results to arbitrary algebraically closed fields of characteristic $0$ in an ad hoc way (see \cite[Proposition 2]{DV}).

The goal of this paper is to use an importat result by Margaux \cite[Theorem 1.1]{M} on algebraic group schemes over algebraically closed fields to show that, given a finite dimensional nonassociative (i.e., not necessarily associative) algebra $\cA$ over an algebraically closed field $\FF$ such that the (affine) group scheme of automorphisms $\AAut(\cA)$ is smooth (this is automatically satisfied if the characteristic is $0$), and given a field extension $\KK/\FF$ for an algebraically closed field $\KK$, the classifications of gradings up to isomorphism or up to equivalence for $\cA$ and for $\cA_\KK\bydef \cA\otimes_\FF\KK$ are equivalent: Any grading on $\cA_\KK$ is isomorphic or equivalent to a  grading obtained by extension of a unique (up to isomorphism or equivalence) grading on $\cA$.

In particular, given a finite dimensional simple Lie algebra $\frg$ over an algebraically closed field $\FF$ of characteristic $0$, $\frg$ is defined over the algebraic closure $\overline{\QQ}$ of the rational numbers: $\frg=\frg_0\otimes_{\overline{\QQ}}\FF$ for a (unique) simple Lie algebra $\frg_0$ over $\overline{\QQ}$. Hence the classifications of gradings on $\frg$ and on $\frg_0$ are equivalent, and so are the classifications of gradings on $\frg_0$ and on $\frg_\CC\bydef \frg_0\otimes_{\overline{\QQ}}\CC$.

The conclusion is that
\begin{center}
\emph{it is enough to classify gradings over the complex numbers!}
\end{center}

\medskip

In the next section, the necessary definitions and results on gradings by abelian groups will be reviewed in a way suitable for our purposes. The result of Margaux will be recalled in Section 3. As a consequence, the equivalence of the classifications of gradings up to isomorphism mentioned above will be derived quickly. Some extra arguments are needed for the equivalence of the classifications up to equivalence.

%-----------------------
\bigskip

\section{Gradings}

This section will review some basic facts on gradings that will be needed later on. The reader may consult the first chapter of \cite{EK} for details.

\smallskip

Let $\cA$ be a nonassociative algebra over a field $\FF$ and let $G$ be an abelian group. A \emph{grading} on $\cA$ by $G$, or \emph{$G$-grading}, is a vector space decomposition
\[
\Gamma: \cA=\bigoplus_{g\in G}\cA_g,
\]
satisfying $\cA_g\cA_h\subseteq \cA_{gh}$ for all $g,h\in G$. In this case, the nonzero elements in $\cA_g$ are \emph{homogeneous of degree} $g$ and we write $\deg x=g$. The subspace $\cA_g$ is the \emph{homogeneous component} of degree $g$.

The set
\[
\supp\Gamma\bydef \{g\in G : \cA_g\ne 0\}
\]
is called the \emph{support} of $\Gamma$. Without loss of generality we may, and will, restrict to the case where $G$ is generated by the support.

\smallskip

Gradings on $\cA$ may be compared in two ways, depending on whether the group $G$ is taken as part of the definition.

Two $G$-gradings $\Gamma:\cA=\bigoplus_{g\in G}\cA_g$ and $\Gamma':\cA=\bigoplus_{g\in G}\cA_g'$ are \emph{isomorphic} if there is an automorphism $\varphi\in \Aut(\cA)$ such that $\varphi(\cA_g)=\cA_g'$ for any $g\in G$. We say then that $\varphi:\Gamma\rightarrow \Gamma'$ is an isomorphism.

On the other hand, given two abelian groups $G$ and $H$ and gradings $\Gamma:\cA=\bigoplus_{g\in G}\cA_g$ and $\Gamma':\cA=\bigoplus_{h\in H}\cA_h'$, then $\Gamma$ and $\Gamma'$ are said to be \emph{equivalent} if there is a bijection $\alpha:\supp\Gamma\rightarrow\supp\Gamma'$ and an automorphism $\varphi\in\Aut(\cA)$ such that $\varphi(\cA_g)=\cA_{\alpha(g)}'$ for any  $g\in\supp\Gamma$.

\smallskip

For any group homomorphism $\rho:G\rightarrow H$ and any $G$-grading $\Gamma:\cA=\bigoplus_{g\in G}\cA_g$, the decomposition
\[
{}^\rho\Gamma:\cA=\bigoplus_{h\in H}\cA_h',
\]
with $\cA_h'\bydef \sum_{\rho(g)=h}\cA_g$, is the \emph{grading induced from $\Gamma$ by $\rho$}. The new grading ${}^\rho\Gamma$ is an example of a \emph{coarsening}.

A grading $\Gamma:\cA=\bigoplus_{g\in G}\cA_g$ is a \emph{refinement} of another grading $\Gamma':\cA=\bigoplus_{h\in H}\cA_h'$ if for any $g\in\supp\Gamma$, there is an element $h\in\supp\Gamma'$ such that $\cA_g\subseteq \cA_h'$. In other words, for any $h\in\supp\Gamma'$, the homogeneous component $\cA_h'$ is the (direct) sum of some of the homogeneous components of $\Gamma$. The grading $\Gamma'$ is then said to be a \emph{coarsening} of $\Gamma$. The refinement is proper if for some $g\in\supp\Gamma$, the containment $\cA_g\subseteq \cA_h'$ above is strict. The grading $\Gamma$ is called \emph{fine} if it does not admit any proper refinement. The root space decomposition of any finite dimensional semisimple Lie algebra over an algebraically closed field of characteristic $0$ is an example of fine grading.

\smallskip

A grading on the algebra $\cA$ may be realized by different groups. For instance, consider the Lie algebra $\frsl_2(\FF)$ of $2\times 2$ matrices of trace zero and its basis $\{H,E,F\}$ with
\[
H=\left(\begin{smallmatrix} 1&0\\ 0&-1\end{smallmatrix}\right),\quad
E=\left(\begin{smallmatrix} 0&1\\ 0&0\end{smallmatrix}\right),\quad
F=\left(\begin{smallmatrix} 0&0\\ 1&0\end{smallmatrix}\right).
\]
This Lie algebra is graded by the integers modulo $n$, for any $n\geq 3$, with the same homogeneous components: $\deg E=1$, $\deg H=0$, $\deg F=-1$. So this `same' grading can be realized as a grading by $\ZZ/n\ZZ$ for any $n\geq 3$. However, the `natural grading group' in this situation is $\ZZ$.

More precisely, given a grading $\Gamma:\cA=\bigoplus_{g\in G}\cA_g$, let $U(\Gamma)$ be the group with generators $\{\alpha_g: g\in\supp\Gamma\}$ and relations $\alpha_g\alpha_h=\alpha_{gh}$ in case $g,h,gh\in\supp\Gamma$. The grading $\Gamma$ is realized as a grading $\widehat{\Gamma}$ by $U(\Gamma)$ with support $\{\alpha_g: g\in \supp\Gamma\}$ (the set of generators of $U(\Gamma)$), where the homogeneous component of degree $\alpha_g$ is precisely $\cA_g$. Moreover, $U(\Gamma)$ is characterized by the following universal property: given any realization of $\Gamma$ as a grading by an abelian group $H$, that is, given an $H$-grading $\Gamma':\cA=\bigoplus_{h\in H}\cA_h'$ such that for any $h\in \supp\Gamma'$ there is a $g\in \supp\Gamma$ with $\cA_h'=\cA_g$, there is a unique homomorphism $\rho:U(\Gamma)\rightarrow H$ such that $\Gamma'={}^\rho\widehat{\Gamma}$.

\begin{remark}\label{re:alpha_Gamma} Two gradings $\Gamma$ and $\Gamma'$ are then equivalent if and only if there is an automorphism $\varphi\in\Aut(\cA)$ and a group isomorphism $\alpha:U(\Gamma)\rightarrow U(\Gamma')$ such that
$\varphi$ is an isomorphism ${}^\alpha\Gamma\rightarrow \Gamma'$ of $U(\Gamma')$-gradings.
\end{remark}

\smallskip

Assume from now on that the algebra $\cA$ is finite dimensional. Then the group scheme of automorphisms $\AAut(\cA)$ is representable (i.e., affine). Recall that $\AAut(\cA)$ is the functor
\[
\begin{split}
\AAut(\cA): \AlgF&\longrightarrow \Grp\\
R\ &\mapsto\ \Aut_R(\cA\otimes_\FF R)\quad \text{(automorphisms as algebras over $R$)},
\end{split}
\]
with the natural action on homomorphisms, where $\AlgF$ denotes the category of unital commutative and associative algebras over $\FF$ and $\Grp$ the category of groups.

Any $G$-grading $\Gamma:\cA=\bigoplus_{g\in G}\cA_g$ induces a homomorphism of affine group schemes (see, for instance, \cite[Chapter 1]{EK})
\[
\theta_\Gamma: G^D\rightarrow \AAut(\cA),
\]
where $G^D$ is the dual of the constant group scheme determined by $G$. That is, $G^D(R)=\Hom_{\AlgF}(\FF G,R)$ for any object $R$ in $\AlgF$. Here $\FF G$ denotes the group algebra of $G$. The homomorphism $\theta_\Gamma$ is defined as follows. For any algebra homomorphism $\chi:\FF G\rightarrow R$ (i.e., $\chi\in G^D(R)$), $\theta_\Gamma(\chi)$ is the automorphism
\[
\begin{split}
\theta_\Gamma(\chi):\cA\otimes_\FF R&\longrightarrow \cA\otimes_\FF R,\\
a\otimes r\,&\mapsto\ a\otimes \chi(g)r,
\end{split}
\]
for any $g\in G$, $a\in\cA_g$ and $r\in R$.

Conversely, given any homomorphism $\theta:G^D\rightarrow \AAut(\cA)$ and the `generic element' in $G^D$: $\id_{\FF G}\in G^D(\FF G)$, the image $\theta(\id_{\FF G}):\cA\otimes_\FF \FF G\rightarrow \cA\otimes_\FF \FF G$ is an automorphism of algebras over $\FF G$. Consider, for any $g\in G$, the subspace
\[
\cA_g\bydef\{ a\in\cA : \theta(\id_{\FF G})(a\otimes 1)=a\otimes g\}.
\]
Then $\cA$ is the direct sum $\bigoplus_{g\in G}\cA_g$ and this defines a $G$-grading $\Gamma_\theta$ of $\cA$. The correspondences $\Gamma\mapsto \theta_\Gamma$ and $\theta\mapsto \Gamma_\theta$ are inverses of each other.

We thus conclude that the set of $G$-gradings on $\cA$ can be identified with the set of homomorphisms $\Hom(G^D,\AAut(\cA))$.

Besides, if two $G$-gradings $\Gamma:\cA=\bigoplus_{g\in G}\cA_g$ and $\Gamma':\cA=\bigoplus_{g\in G}\cA'_g$ are isomorphic and $\varphi\in\Aut(\cA)$\,($=\AAut(\cA)(\FF)$, the group of rational points of $\AAut(\cA)$) satisfies $\varphi(\cA_g)=\cA'_g$ for any $g\in G$, the conjugation by $\varphi$ gives an automorphism $\Omega_\varphi$ of $\AAut(\cA)$ (for any $R$ we consider the conjugation by $\varphi\otimes \id:\cA\otimes_\FF R\rightarrow \cA\otimes_\FF R$) such that $\theta_{\Gamma'}=\Omega_\varphi\circ \theta_\Gamma$, and conversely.

Therefore, the isomorphism classes of $G$-gradings on $\cA$ are in bijection with
\[
\Hom(G^D,\AAut(\cA))/\Aut(\cA),
\]
the set of conjugacy classes of homomorphisms $G^D\rightarrow \AAut(\cA)$ under the natural action of $\Aut(\cA)=\AAut(\cA)(\FF)$ by conjugation (see \cite[Proposition 1.36]{EK}).

\begin{remark}\label{re:coarsening}
Given a grading $\Gamma:\cA=\bigoplus_{g\in G}\cA_g$ on the algebra $\cA$ by the abelian group $G$, and a group homomorphism $\alpha:G\rightarrow H$ for another abelian group $H$, $\alpha$ induces naturally a homomorphism $\alpha^D:H^D\rightarrow G^D$ and
\[
\theta_{{}^\alpha\Gamma}=\theta_\Gamma\circ\alpha^D.
\]
\end{remark}

Given a field extension $\KK/\FF$, the group scheme $\AAut(\cA_\KK)$ of $\cA_\KK\bydef \cA\otimes_\FF \KK$ coincides with the `restriction' $\AAut(\cA)_\KK$ of $\AAut(\cA)$ to the subcategory $\AlgK$.

In this situation we will write $G_\FF^D$ to denote the affine group scheme over $\FF$ represented by the group algebra $\FF G$, and by $G_\KK^D$ the affine group scheme over $\KK$ represented by the group algebra $\KK G$. Again, $G_\KK^D$ is the restriction $\bigl(G_\FF^D\bigr)_\KK$.

Also, given a $G$-grading $\Gamma:\cA=\bigoplus_{g\in G}\cA_g$, $\Gamma$ induces naturally a grading $\Gamma_\KK$ on $\cA_\KK$, where $(\cA_\KK)_g\bydef \cA_g\otimes_\FF \KK$ for any $g\in \supp\Gamma$. Then the homomorphism $\theta_{\Gamma_\KK}: G_\KK^D\rightarrow \AAut(\cA_\KK)$ is the restriction of $\theta_\Gamma$.

%-------------------------
\bigskip

\section{Main results}

In 2009, Margaux proved the following general result:

\begin{theorem}[{\cite[Theorem 1.1]{M}}]\label{th:Margaux}
Let $\Gs$ and $\Hs$ be algebraic group schemes over an algebraically closed field $\FF$ and let $\KK/\FF$ be an algebraically closed field extension. If $\Gs$ is affine and linearly reductive and $\Hs$ is smooth, then every $\KK$-homomorphism $\Gs_\KK\rightarrow \Hs_\KK$ is $\Hs(\KK)$-conjugate to one induced by a $\FF$-homomorphism $\Gs\rightarrow \Hs$. More precisely, the natural map
\[
\Hom_\FF(\Gs,\Hs)/\Hs(\FF)\longrightarrow \Hom_\KK(\Gs_\KK,\Hs_\KK)/\Hs(\KK)
\]
is bijective.
\end{theorem}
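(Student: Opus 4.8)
The plan is to prove the two directions of bijectivity separately; only surjectivity will need the hypotheses that $\Gs$ be linearly reductive and $\Hs$ smooth.

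\emph{Injectivity.} Suppose $f,g\colon\Gs\to\Hs$ are $\FF$-homomorphisms whose base changes $f_\KK,g_\KK$ are conjugate by some $h\in\Hs(\KK)$. I would consider the transporter functor $T$ sending $R\in\AlgF$ to $\{h\in\Hs(R): hg_Rh^{-1}=f_R\}$. Since $\Gs$ is affine of finite type and $\Hs$ is algebraic, $T$ is representable by a closed subscheme of $\Hs$, hence of finite type over $\FF$, and its formation commutes with the base change to $\KK$. By hypothesis $T(\KK)=T_\KK(\KK)\neq\emptyset$, so $T_\KK\neq\emptyset$; since $\mathrm{Spec}\,\KK\to\mathrm{Spec}\,\FF$ is faithfully flat (hence surjective after any base change) we get $T\neq\emptyset$, and as $T$ is of finite type over the algebraically closed field $\FF$ it has an $\FF$-point, which conjugates $g$ to $f$ over $\FF$. (This direction uses neither hypothesis.)

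\emph{Surjectivity, setup.} Fix $\phi\colon\Gs_\KK\to\Hs_\KK$. Because $\Gs$ and $\Hs$ are of finite type, $\phi$ is already defined over some finitely generated $\FF$-subalgebra $R\subseteq\KK$, which I may take to be a domain; thus I obtain an integral affine $\FF$-variety $S=\mathrm{Spec}\,R$ and a homomorphism $\Psi\colon\Gs_S\to\Hs_S$ of $S$-group schemes restricting to $\phi$ over $\KK$. As $\FF$ is algebraically closed, $S$ has an $\FF$-point $s_0$; set $f\bydef s_0^\ast\Psi\colon\Gs\to\Hs$, an $\FF$-homomorphism. It then suffices to show that $f_\KK$ is $\Hs(\KK)$-conjugate to $\phi$. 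This will follow from a rigidity property: \emph{for $\Gs$ linearly reductive and $\Hs$ smooth, the $\Hs$-conjugacy class of any homomorphism $\Gs\to\Hs$ is open in $\underline{\Hom}_{\mathrm{gp}}(\Gs,\Hs)$.}

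\emph{The rigidity argument.} Linear reductivity of $\Gs$ is equivalent to the vanishing $H^{\ge 1}(\Gs,M)=0$ for every rational $\Gs$-module $M$. Apply this to $M=\mathfrak{h}\bydef\Lie\,\Hs$, regarded as a $\Gs$-module via $\Ad\circ f$: the vanishing of $H^2(\Gs,\mathfrak{h})$ makes the homomorphism functor formally smooth at $f$ with tangent space the cocycles $Z^1(\Gs,\mathfrak{h})$, while the orbit $\cO=\Hs/\Stab_\Hs(f)$—smooth because $\Hs$ is smooth—has tangent space at $f$ the coboundaries $B^1(\Gs,\mathfrak{h})$; the vanishing of $H^1(\Gs,\mathfrak{h})=Z^1/B^1$ then says that the (locally closed) immersion $\cO\hookrightarrow\underline{\Hom}_{\mathrm{gp}}(\Gs,\Hs)$ induces an isomorphism on tangent spaces at $f$, so by smoothness of both $\cO$ is open at $f$. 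Since this holds at every point, the orbits partition $\underline{\Hom}_{\mathrm{gp}}(\Gs,\Hs)$ into open-and-closed subschemes. Consequently the morphism $S\to\underline{\Hom}_{\mathrm{gp}}(\Gs,\Hs)$ corresponding to $\Psi$, having connected source, factors through a single orbit, which must be $\cO$ since $f=\Psi|_{s_0}$ lies in the image. Base changing, $\phi$ and $f_\KK$ are both $\KK$-points of $\cO_\KK=\Hs_\KK/\Stab_{\Hs_\KK}(f_\KK)$; and over the algebraically closed field $\KK$ every torsor under a finite-type group scheme is trivial (by the Nullstellensatz it has a rational point), so $\Hs(\KK)\to\cO_\KK(\KK)$ is surjective and $\cO_\KK(\KK)=\Hs(\KK)\cdot f_\KK$. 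Hence $\phi$ is $\Hs(\KK)$-conjugate to $f_\KK$, completing surjectivity.

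\emph{Main obstacle.} The crux is the rigidity statement and its technical underpinnings: the representability (or at least the formal-smoothness and tangent-space bookkeeping) of $\underline{\Hom}_{\mathrm{gp}}(\Gs,\Hs)$, the identification of its deformation theory with the rational cohomology $H^\bullet(\Gs,\mathfrak{h})$, and the cohomological triviality of linearly reductive group schemes. Smoothness of $\Hs$ enters exactly to upgrade ``the orbit has full tangent space at $f$'' to ``the orbit is open at $f$'' and to guarantee that the homogeneous spaces $\Hs/\Stab_\Hs(f)$ are well-behaved. Once these ingredients are in place, the remaining descent steps—faithful flatness of $\mathrm{Spec}\,\KK/\mathrm{Spec}\,\FF$, existence of rational points on finite-type schemes over an algebraically closed field, triviality of torsors there—are routine.
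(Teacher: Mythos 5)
This is not a statement the paper proves: Theorem~\ref{th:Margaux} is imported verbatim from Margaux's article \cite{M} and used as a black box, so there is no in-paper argument to measure your proposal against. Judged against the actual proof in \cite{M}, your outline identifies the right ingredients --- spreading out over a finitely generated subring of $\KK$, specializing at an $\FF$-point, cohomological rigidity coming from the vanishing of $H^1(\Gs,\mathfrak{h})$ and $H^2(\Gs,\mathfrak{h})$ for linearly reductive $\Gs$, and the Nullstellensatz/torsor-triviality arguments over algebraically closed fields. Your injectivity argument via the transporter scheme is complete and correct as written, and indeed uses neither hypothesis.

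The genuine gap is the one you flag yourself but then build the entire surjectivity argument on: the representability of $\underline{\Hom}_{\mathrm{gp}}(\Gs,\Hs)$ by a scheme locally of finite type. This is not available in the stated generality (it is a delicate theorem even for $\Gs$ of multiplicative type or reductive, cf.\ SGA3, Exp.\ XI and XXIV, and here $\Hs$ is only assumed smooth, not affine), and without it the phrases ``open in $\underline{\Hom}_{\mathrm{gp}}(\Gs,\Hs)$'', ``immersion $\cO\hookrightarrow\underline{\Hom}_{\mathrm{gp}}(\Gs,\Hs)$'' and ``the orbits partition $\underline{\Hom}_{\mathrm{gp}}(\Gs,\Hs)$ into open-and-closed subschemes'' have no referent. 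Margaux's proof avoids the global Hom scheme altogether: the cohomology vanishing (itself the titular result of \cite{M}) is fed into the infinitesimal rigidity machinery of SGA3, Exp.\ III --- unobstructed lifting of homomorphisms along square-zero extensions when $H^2=0$, uniqueness of lifts up to conjugation when $H^1=0$ --- to conjugate the family $\Psi$ to the constant family $f_S$ first formally at $s_0$, then over an \'etale neighbourhood of $s_0$, and finally over $\KK$ by embedding the function field of that neighbourhood into $\KK$. To salvage your more geometric route you would have to either prove representability of the Hom functor under exactly these hypotheses, or recast the openness-of-orbits claim as a statement about the locus of $s\in S$ where $\Psi_s$ is conjugate to $f$, which is what the SGA3 rigidity actually delivers.
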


The affine group schemes of the form $G^D$ for finitely generated groups $G$, are the diagonalizable group schemes, and hence they are linearly reductive. When dealing with gradings on a finite dimensional algebra, the grading group $G$ may always be taken to be finitely generated (by the finite support), and hence the affine scheme $G^D$ is algebraic. Also, for any finite dimensional nonassociative algebra $\cA$, the scheme 
$\AAut(\cA)$ is algebraic. Recall too that over fields of characteristic zero, any affine group scheme is smooth.

\smallskip

Therefore, as a direct consequence of Theorem \ref{th:Margaux} we obtain the following result.

\begin{theorem}\label{th:isom_classes}
Let $\cA$ be a finite dimensional nonassociative algebra over an algebraically closed field $\FF$ satisfying that $\AAut(\cA)$ is smooth, let $\KK/\FF$ be an algebraically closed field extension, and let $G$ be a finitely generated group. For any $G$-grading $\Gamma$ on $\cA$ denote by $[\Gamma]$ the corresponding isomorphism class. Then the map
\[
\begin{split}
\left\{\begin{matrix}
\text{isomorphism classes of}\\
\text{$G$-gradings on $\cA$}
\end{matrix}\right\}
&\longrightarrow
\left\{\begin{matrix}
\text{isomorphism classes of}\\
\text{$G$-gradings on $\cA_\KK$}
\end{matrix}\right\}\\[4pt]
[\Gamma]\qquad&\mapsto\qquad [\Gamma_\KK]
\end{split}
\]
is a bijection.
\end{theorem}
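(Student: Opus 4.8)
The plan is to transport the question to the language of homomorphisms of group schemes, where Margaux's theorem applies verbatim. Recall from Section 2 that $G$-gradings on $\cA$ correspond bijectively to homomorphisms $G_\FF^D\to\AAut(\cA)$, and two such gradings are isomorphic precisely when the corresponding homomorphisms are conjugate under $\AAut(\cA)(\FF)=\Aut(\cA)$; the same holds over $\KK$. Thus the two sets in the statement are identified, respectively, with
\[
\Hom_\FF\bigl(G_\FF^D,\AAut(\cA)\bigr)/\Aut(\cA)
\qquad\text{and}\qquad
\Hom_\KK\bigl(G_\KK^D,\AAut(\cA_\KK)\bigr)/\Aut(\cA_\KK).
\]

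First I would check that the map in the statement is compatible, under these identifications, with the natural map of Theorem \ref{th:Margaux}. The point is that $G_\KK^D=\bigl(G_\FF^D\bigr)_\KK$ and $\AAut(\cA_\KK)=\AAut(\cA)_\KK$, and that $\theta_{\Gamma_\KK}$ is the restriction of $\theta_\Gamma$, as recorded at the end of Section 2. Hence sending $[\Gamma]\mapsto[\Gamma_\KK]$ corresponds exactly to the base-change map $\Hom_\FF(G_\FF^D,\AAut(\cA))/\Aut(\cA)\to\Hom_\KK(G_\KK^D,\AAut(\cA_\KK))/\Aut(\cA_\KK)$ sending the class of a homomorphism $\psi$ to the class of $\psi_\KK$.

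Next I would verify the hypotheses of Theorem \ref{th:Margaux} with $\Gs=G_\FF^D$ and $\Hs=\AAut(\cA)$. Since $G$ is finitely generated, $G_\FF^D$ is a diagonalizable (in particular affine and algebraic) group scheme, and diagonalizable group schemes are linearly reductive; this is noted in the paragraph preceding the statement. Since $\cA$ is finite dimensional, $\AAut(\cA)$ is an algebraic affine group scheme, and it is smooth by hypothesis. Therefore Theorem \ref{th:Margaux} applies and gives that the base-change map above is a bijection, which is exactly what we want.

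I do not expect a serious obstacle here: the content is entirely in Margaux's theorem, and the remaining work is the bookkeeping of the dictionary between gradings and group-scheme homomorphisms together with the behaviour of both sides under the base change $-\otimes_\FF\KK$. The one point requiring a little care is the claim that the base-change map on conjugacy classes is well defined and matches $[\Gamma]\mapsto[\Gamma_\KK]$ on the nose, i.e.\ that conjugating $\theta_\Gamma$ by $\varphi\in\Aut(\cA)$ and then restricting agrees with restricting and then conjugating by $\varphi\otimes\id$; this follows from the functoriality of $\Omega_\varphi$ already discussed in Section 2, since $\varphi\in\Aut(\cA)\subseteq\Aut(\cA_\KK)$ and the conjugation automorphism $\Omega_\varphi$ of $\AAut(\cA)$ restricts to $\Omega_{\varphi\otimes\id}$ on $\AAut(\cA_\KK)$.
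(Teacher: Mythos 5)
Your proposal is correct and follows exactly the paper's own route: identify isomorphism classes of $G$-gradings with conjugacy classes in $\Hom(G^D,\AAut(\cA))$, match the extension-of-scalars map with the base-change map of homomorphisms, and invoke Theorem \ref{th:Margaux}. The paper's proof is just a terser version of yours, leaving the compatibility bookkeeping implicit.
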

\begin{proof}
The set of isomorphism classes of $G$-gradings on $\cA$ is in bijection with $\Hom_\FF(G_\FF^D,\AAut(\cA))/\Aut(\cA)$, with the class $[\Gamma]$ of a $G$-grading corresponding to the conjugacy class of the homomorphism $\theta_\Gamma: G_\FF^D\rightarrow \AAut(\cA)$. Therefore the result follows immediately from Theorem \ref{th:Margaux}.
\end{proof}

The next result deals with equivalence classes of gradings:

\begin{proposition}
Let $\cA$ be a finite dimensional nonassociative algebra over an algebraically closed field $\FF$ satisfying that $\AAut(\cA)$ is smooth and let $\KK/\FF$ be an algebraically closed field extension. Let $G$ and $H$ be two abelian groups and let $\Gamma:\cA=\bigoplus_{g\in G}\cA_g$ be a $G$-grading on $\cA$ and $\Gamma':\cA=\bigoplus_{h\in H}\cA'_g$ an $H$-grading on $\cA$. Then $\Gamma$ is equivalent to $\Gamma'$ if and only if $\Gamma_\KK$ is equivalent to $\Gamma'_\KK$.
\end{proposition}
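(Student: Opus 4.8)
The plan is to reduce the statement about equivalence to the statement about isomorphism already established in Theorem \ref{th:isom_classes}, using the universal grading group as the bridge. The key observation is Remark \ref{re:alpha_Gamma}: two gradings $\Gamma$ and $\Gamma'$ on $\cA$ are equivalent if and only if there is a group isomorphism $\alpha:U(\Gamma)\rightarrow U(\Gamma')$ and an automorphism $\varphi\in\Aut(\cA)$ making $\varphi$ an isomorphism of $U(\Gamma')$-gradings from ${}^\alpha\widehat\Gamma$ to $\widehat{\Gamma'}$. So, first I would observe that since $\cA$ is finite dimensional, $U(\Gamma)$ and $U(\Gamma')$ are finitely generated, and that forming the universal group commutes with base field extension: $U(\Gamma_\KK)\cong U(\Gamma)$ canonically (the support and the relations among support elements do not change when we tensor with $\KK$, since $(\cA_g)\otimes_\FF\KK\ne 0$ exactly when $\cA_g\ne 0$, and $(\cA_g)(\cA_h)\otimes\KK\subseteq(\cA_{gh})\otimes\KK$), and likewise $\widehat{\Gamma_\KK}=(\widehat\Gamma)_\KK$.

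The forward direction is immediate: if $\varphi$ and $\alpha$ witness the equivalence of $\Gamma$ and $\Gamma'$ over $\FF$, then $\varphi\otimes\id_\KK\in\Aut(\cA_\KK)$ and the same $\alpha$ (now viewed as $U(\Gamma_\KK)\rightarrow U(\Gamma'_\KK)$) witness the equivalence of $\Gamma_\KK$ and $\Gamma'_\KK$. For the converse, suppose $\Gamma_\KK$ is equivalent to $\Gamma'_\KK$. Using Remark \ref{re:alpha_Gamma} and the identifications above, there is a group isomorphism $\alpha:U(\Gamma)\rightarrow U(\Gamma')$ such that ${}^\alpha\widehat{\Gamma_\KK}=\bigl({}^\alpha\widehat\Gamma\bigr)_\KK$ is isomorphic (as a $U(\Gamma')$-grading on $\cA_\KK$) to $\widehat{\Gamma'_\KK}=(\widehat{\Gamma'})_\KK$. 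Here the crucial point is that the group isomorphism $\alpha$ can be chosen over $\FF$ already — and it can, because $\alpha$ is just an abstract isomorphism between the two universal groups, which are intrinsic to the $\FF$-gradings and unchanged by extension. Now ${}^\alpha\widehat\Gamma$ and $\widehat{\Gamma'}$ are two gradings on $\cA$ by the \emph{same} finitely generated group $U(\Gamma')$, and their extensions to $\cA_\KK$ are isomorphic as $U(\Gamma')$-gradings. By Theorem \ref{th:isom_classes} applied with $G=U(\Gamma')$, the gradings ${}^\alpha\widehat\Gamma$ and $\widehat{\Gamma'}$ are already isomorphic over $\FF$; let $\varphi\in\Aut(\cA)$ realize this isomorphism. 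Then $\varphi$ together with $\alpha$ witness, via Remark \ref{re:alpha_Gamma}, that $\Gamma$ is equivalent to $\Gamma'$.

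The main obstacle, and the step that requires care, is the claim that a group isomorphism witnessing equivalence over $\KK$ can be replaced by one defined purely in terms of the $\FF$-data. This comes down to two things that I would want to verify cleanly: (i) that $U(\Gamma_\KK)$ is canonically identified with $U(\Gamma)$ — i.e. that extension of scalars does not create or destroy any homogeneous components and does not alter which products of homogeneous components are nonzero in a way that changes the defining relations $\alpha_g\alpha_h=\alpha_{gh}$ — and similarly that the canonical grading $\widehat{\Gamma}$ extends to the canonical grading $\widehat{\Gamma_\KK}$; and (ii) that under these identifications, an equivalence $(\varphi_\KK,\alpha_\KK)$ over $\KK$ yields, after possibly adjusting $\varphi_\KK$ but keeping $\alpha_\KK$ (now an $\FF$-map $\alpha$), a situation to which Theorem \ref{th:isom_classes} directly applies. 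Point (i) is a routine but essential bookkeeping check; point (ii) is really the content, and it is exactly where one invokes Theorem \ref{th:isom_classes} — the hard analytic input (Margaux's theorem) has already been absorbed there, so no further appeal to group schemes is needed here. Once these are in place, the proof is a short chain of equivalences.
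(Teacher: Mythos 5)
Your proposal is correct and follows essentially the same route as the paper's own proof: both directions reduce equivalence to isomorphism over the common universal group via Remark \ref{re:alpha_Gamma}, using the identification $U(\Gamma_\KK)=U(\Gamma)$ and the equality $({}^\alpha\Gamma)_\KK={}^\alpha(\Gamma_\KK)$, and then invoke Theorem \ref{th:isom_classes}. The only difference is that you spell out the bookkeeping behind $U(\Gamma_\KK)=U(\Gamma)$, which the paper simply asserts.
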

\begin{proof}
If $\Gamma$ is equivalent to $\Gamma'$, there is a bijection $\alpha:\supp\Gamma\rightarrow \supp \Gamma'$ and an automorphism $\varphi\in\Aut(\cA)$ such that $\varphi(\cA_g)=\cA'_{\alpha(g)}$ for any $g\in \supp\Gamma$. But $\supp(\Gamma_\KK)=\supp(\Gamma)$, $\supp(\Gamma'_\KK)=\supp(\Gamma')$ and $\varphi$ extends to an automorphism of $\cA_\KK$. Thus, $\Gamma_\KK$ is equivalent to $\Gamma'_\KK$.

Conversely, assume that $\Gamma_\KK$ and $\Gamma'_\KK$ are equivalent. Denote by $K$ and $K'$ the universal groups $U(\Gamma)=U(\Gamma_\KK)$ and $U(\Gamma')=U(\Gamma'_\KK)$ respectively, and consider both $\Gamma$ and $\Gamma'$ realized as gradings by their universal groups. Then there exists a group isomorphism $\alpha:K\rightarrow K'$ such that ${}^\alpha(\Gamma_\KK)$ is isomorphic to $\Gamma'_\KK$ (see Remark \ref{re:alpha_Gamma}). But ${}^\alpha(\Gamma_\KK)$ equals $({}^\alpha\Gamma)_\KK$, so Theorem \ref{th:isom_classes} shows that ${}^\alpha\Gamma$ is isomorphic to $\Gamma'$, and this proves that $\Gamma$ and $\Gamma'$ are equivalent.
\end{proof}

\begin{proposition}
Let $\cA$ be a finite dimensional nonassociative algebra over an algebraically closed field $\FF$ satisfying that $\AAut(\cA)$ is smooth and let $\KK/\FF$ be an algebraically closed field extension. Let $G$ be an abelian group and let $\Gamma:\cA=\bigoplus_{g\in G}\cA_g$ be a $G$-grading on $\cA$. Then $\Gamma$ is fine if and only if so is $\Gamma_\KK$.
\end{proposition}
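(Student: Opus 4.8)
The plan is to reduce the statement about fineness to the already-established equivalence of equivalence classes, using the characterization of fineness in terms of proper refinements together with the universal-group machinery of Section~2. First I would reformulate "$\Gamma$ is not fine" in a way that survives base change. Recall that $\Gamma$ admits a proper refinement if and only if there is a grading $\widetilde\Gamma$ on $\cA$ such that $\Gamma$ is a proper coarsening of $\widetilde\Gamma$; realizing everything by universal groups, $\widetilde\Gamma$ is a proper refinement of $\Gamma$ precisely when $U(\Gamma)$ is a proper quotient of $U(\widetilde\Gamma)$ via the canonical map and the homogeneous components of $\widetilde\Gamma$ are strictly finer. Since $U(\Gamma)=U(\Gamma_\KK)$ and the homogeneous components of $\Gamma_\KK$ are the components of $\Gamma$ tensored with $\KK$, a refinement of $\Gamma$ base-changes to a refinement of $\Gamma_\KK$ of exactly the same combinatorial type; this gives the easy direction: if $\Gamma$ is not fine, then $\Gamma_\KK$ is not fine.

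For the converse, suppose $\Gamma_\KK$ is not fine, so there is a proper refinement $\Delta$ of $\Gamma_\KK$ on $\cA_\KK$. The issue is that a priori $\Delta$ need not be of the form $\widetilde\Gamma_\KK$ for a grading on $\cA$. Here is where I would invoke Theorem~\ref{th:isom_classes}. Let $G$ be the universal group $U(\Delta)$, a finitely generated abelian group. The refinement $\Delta$ of $\Gamma_\KK$ induces a surjection $\pi:U(\Delta)\to U(\Gamma_\KK)=U(\Gamma)$, and $\Gamma_\KK={}^{\pi}\Delta$. Now $\Delta$, as a $G$-grading on $\cA_\KK$, has an isomorphism class which, by Theorem~\ref{th:isom_classes}, is $[\Gamma'_\KK]$ for a (unique up to isomorphism) $G$-grading $\Gamma'$ on $\cA$. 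So there is $\psi\in\Aut(\cA_\KK)$ with $\psi(\cA_g\otimes_\FF\KK \text{ appropriately})$—more precisely, $\psi$ carries the homogeneous components of $\Gamma'_\KK$ to those of $\Delta$. I would then push the coarsening map through: $({}^{\pi}\Gamma')_\KK={}^{\pi}(\Gamma'_\KK)$ is isomorphic to ${}^{\pi}\Delta=\Gamma_\KK$, and hence by Theorem~\ref{th:isom_classes} again (applied with the group $U(\Gamma)$) the $U(\Gamma)$-grading ${}^{\pi}\Gamma'$ is isomorphic to $\Gamma$. Thus, after composing with the automorphism realizing this last isomorphism, $\Gamma'$ becomes a refinement of $\Gamma$ on $\cA$; and it is a \emph{proper} refinement because $\Gamma'_\KK$ (being isomorphic to $\Delta$) has the same dimensions of homogeneous components as $\Delta$, which are strictly smaller than those of $\Gamma_\KK$, i.e.\ of $\Gamma$. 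Hence $\Gamma$ is not fine.

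The main obstacle I anticipate is bookkeeping with the universal groups and making sure the coarsening maps $\pi$ on the $\FF$-side and the $\KK$-side are literally the same homomorphism, so that Remark~\ref{re:coarsening} (i.e.\ $\theta_{{}^\alpha\Gamma}=\theta_\Gamma\circ\alpha^D$) can be applied coherently before and after base change. Concretely, one must check that $U(\Gamma_\KK)=U(\Gamma)$ and $U(\Delta)$ are computed from the supports alone—which they are, since the defining relations of the universal group only involve which products of degrees land in the support, and the support is unchanged by $\otimes_\FF\KK$—and that "$\Delta$ refines $\Gamma_\KK$" is exactly the statement that the canonical surjection $U(\Delta)\to U(\Gamma_\KK)$ exists with ${}^{\pi}\widehat\Delta=\widehat{\Gamma_\KK}$. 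Once this is set up, the argument is a clean double application of Theorem~\ref{th:isom_classes}: once to descend $\Delta$ to $\Gamma'$ over $\FF$, and once to recognize that ${}^{\pi}\Gamma'$ is genuinely isomorphic to $\Gamma$ over $\FF$. The properness of the refinement transfers because dimensions of homogeneous components are preserved under $\otimes_\FF\KK$.
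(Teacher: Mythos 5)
Your proposal is correct and follows essentially the same route as the paper's own proof: realize the proper refinement of $\Gamma_\KK$ over its universal group, use the coarsening homomorphism together with Remark~\ref{re:coarsening}, descend the refinement to $\FF$ via Theorem~\ref{th:isom_classes}, and apply that theorem a second time to identify the descended coarsening with $\Gamma$, with properness preserved because supports (and dimensions of homogeneous components) are unchanged under $\otimes_\FF\KK$. The only differences are cosmetic (the paper maps $U(\tilde\Gamma)$ directly to the given grading group $G$ rather than to $U(\Gamma)$, and checks properness by counting support elements in a fibre of $\alpha$ rather than by comparing dimensions), and you should avoid reusing the letter $G$ for $U(\Delta)$.
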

\begin{proof}
Any proper refinement of $\Gamma$ induces a proper refinement of $\Gamma_\KK$. Hence, if $\Gamma_\KK$ is fine, so is $\Gamma$.

Assume now that $\Gamma_\KK$ is not fine and let $\tilde\Gamma$ be a proper refinement. Take $H=U(\tilde\Gamma)$. Then there is a group homomorphism
\[
\alpha:H=U(\tilde\Gamma)\rightarrow G
\]
such that $\Gamma_\KK={}^\alpha\tilde\Gamma$.

The refinement is proper, so there exists an element $\hat g\in\supp(\Gamma)=\supp(\Gamma_\KK)$ such that $\alpha^{-1}(\hat g)\cap\supp(\tilde\Gamma)$ contains at least two elements. Hence we have $\tilde\Gamma:\cA_\KK=\bigoplus_{h\in H}(\tilde\cA_\KK)_h$ and for any $g\in G$
\[
(\cA_g)_\KK=(\cA_\KK)_g=\bigoplus_{h\in\alpha^{-1}(g)}(\tilde\cA_\KK)_h.
\]
Moreover,
\begin{equation}\label{eq:1}
\theta_{\Gamma_\KK}=\theta_{\tilde\Gamma}\circ\alpha_\KK^D
\end{equation}
(see Remark \ref{re:coarsening}), where $\alpha_\KK^D:G_\KK^D\rightarrow H_\KK^D$ is the natural homomorphism induced by $\alpha$.

By Theorem \ref{th:isom_classes} there exists an $H$-grading $\widehat{\Gamma}:\cA=\bigoplus_{h\in H}\hat\cA_h$ on $\cA$ such that $[\tilde\Gamma]=[\widehat{\Gamma}_\KK]$. In particular we get $\supp(\tilde\Gamma)=\supp(\widehat{\Gamma}_\KK)=\supp(\widehat{\Gamma})$.

Hence there exists an automorphism $\varphi\in\Aut(\cA_\KK)$ such that
\begin{equation}\label{eq:2}
\theta_{\tilde\Gamma}=\Omega_\varphi\circ\theta_{\widehat{\Gamma}_\KK}.
\end{equation}
(Recall that $\Omega_\varphi$ denotes conjugation by $\varphi$.)

Equations \eqref{eq:1} and \eqref{eq:2} give:
\[
\begin{split}
\theta_{\Gamma_\KK}=\theta_{\tilde\Gamma}\circ\alpha_\KK^D&=
\Omega_\varphi\circ\theta_{\widehat{\Gamma}_\KK}\circ\alpha_\KK^D\\
    &=\Omega_\varphi\circ\bigl(\theta_{\widehat{\Gamma}}\circ\alpha^D\bigr)_\KK\\
    &=\Omega_\varphi\circ\bigl(\theta_{{}^\alpha\widehat{\Gamma}}\bigr)_\KK.
\end{split}
\]
Therefore, the $G$-gradings $\Gamma_\KK$ and $\bigl({}^\alpha\widehat{\Gamma}\bigr)_\KK$ are isomorphic and hence, by Theorem \ref{th:isom_classes}, so are the $G$-gradings $\Gamma$ and ${}^\alpha\widehat{\Gamma}$. But ${}^\alpha\widehat{\Gamma}$ is a proper coarsening of $\widehat{\Gamma}$ because $\alpha^{-1}(\hat g)\cap\supp(\widehat{\Gamma})=\alpha^{-1}(\hat g)\cap\supp(\tilde\Gamma)$ contains at least two elements. We conclude that ${}^\alpha\widehat{\Gamma}$ is not fine, and neither is $\Gamma$.
\end{proof}

Our last result is a straightforward consequence of the results above:

\begin{theorem}\label{th:equiv_classes}
Let $\cA$ be a finite dimensional nonassociative algebra over an algebraically closed field $\FF$ satisfying that $\AAut(\cA)$ is smooth and let $\KK/\FF$ be an algebraically closed field extension. For a grading $\Gamma:\cA=\bigoplus_{g\in G}\cA_g$, denote by $[\Gamma]_{\text{eq}}$ its equivalence class. Then the map
\[
\begin{split}
\left\{\begin{matrix}
\text{equivalence classes of}\\
\text{gradings on $\cA$}
\end{matrix}\right\}
&\longrightarrow
\left\{\begin{matrix}
\text{equivalence classes of}\\
\text{gradings on $\cA_\KK$}
\end{matrix}\right\}\\[4pt]
[\Gamma]_{\text{eq}}\qquad&\mapsto\qquad [\Gamma_\KK]_{\text{eq}}
\end{split}
\]
is a bijection, which restricts to a bijection
\[
\begin{split}
\left\{\begin{matrix}
\text{equivalence classes of}\\
\text{fine gradings on $\cA$}
\end{matrix}\right\}
&\longrightarrow
\left\{\begin{matrix}
\text{equivalence classes of}\\
\text{fine gradings on $\cA_\KK$}
\end{matrix}\right\}\\[4pt]
[\Gamma]_{\text{eq}}\qquad&\mapsto\qquad [\Gamma_\KK]_{\text{eq}}.
\end{split}
\]
\end{theorem}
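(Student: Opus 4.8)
The plan is to derive the final theorem directly from the three results already established: Theorem~\ref{th:isom_classes} on isomorphism classes, and the two propositions stating that equivalence and fineness are preserved by the extension $\Gamma\mapsto\Gamma_\KK$. So the main work is purely formal: checking that the assignment $[\Gamma]_{\text{eq}}\mapsto[\Gamma_\KK]_{\text{eq}}$ is well defined, and then establishing injectivity and surjectivity.

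First I would note well-definedness: if $\Gamma$ and $\Gamma'$ are equivalent gradings on $\cA$, then by the first Proposition $\Gamma_\KK$ and $\Gamma'_\KK$ are equivalent, so the map on equivalence classes does not depend on the chosen representative. Injectivity is the other direction of the same Proposition: if $\Gamma_\KK$ is equivalent to $\Gamma'_\KK$, then $\Gamma$ is equivalent to $\Gamma'$, so distinct equivalence classes on $\cA$ have distinct images. For surjectivity, take an arbitrary grading $\Delta:\cA_\KK=\bigoplus_{g\in G}(\cA_\KK)_g$ on $\cA_\KK$; realizing it by its universal group $K=U(\Delta)$, which is finitely generated since the support is finite, Theorem~\ref{th:isom_classes} produces a $K$-grading $\Gamma$ on $\cA$ with $[\Gamma_\KK]=[\wh{\Delta}]$ as isomorphism classes of $K$-gradings, where $\wh{\Delta}$ is $\Delta$ viewed as a $K$-grading. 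In particular $\Gamma_\KK$ and $\Delta$ are isomorphic, hence equivalent, so $[\Delta]_{\text{eq}}=[\Gamma_\KK]_{\text{eq}}$ lies in the image. This proves the first bijection.

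For the restricted bijection, the second Proposition gives exactly what is needed: $\Gamma$ is fine if and only if $\Gamma_\KK$ is fine. Thus the bijection of the first part carries equivalence classes of fine gradings on $\cA$ into equivalence classes of fine gradings on $\cA_\KK$, and its inverse carries fine gradings back to fine gradings; so it restricts to a bijection between the two subsets of fine classes. No additional argument is required beyond invoking the already-proved statements.

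I do not expect any genuine obstacle here, since all the substantive content—especially the descent of an equivalence and the descent of a refinement, which both ultimately rest on Margaux's theorem via Theorem~\ref{th:isom_classes}—has been handled in the preceding results. The only point demanding a little care is the bookkeeping with universal groups in the surjectivity step: one must pass from ``$\Delta$ is an arbitrary grading on $\cA_\KK$'' to ``$\Delta$ is realized by the finitely generated group $U(\Delta)$'' before Theorem~\ref{th:isom_classes} can be applied, and then observe that an isomorphism of gradings is in particular an equivalence. Beyond that the proof is a one-paragraph assembly of the pieces.
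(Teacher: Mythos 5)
Your proposal is correct and matches the paper's intent: the paper gives no written proof, stating only that the theorem is ``a straightforward consequence of the results above,'' and your assembly of Theorem~\ref{th:isom_classes} and the two propositions (well-definedness and injectivity from the equivalence proposition, surjectivity via the universal group and Theorem~\ref{th:isom_classes}, and the fine-grading restriction from the second proposition) is exactly that straightforward consequence, with the bookkeeping correctly attended to.
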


\smallskip

\begin{remark}
For any two real compact Lie groups $A,B$, the natural map
\[
\Hom(A,B)/B\longrightarrow \Hom(\underline{A}(\CC),\underline{B}(\CC))/\underline{B}(\CC)
\]
is a bijection \cite{AYY}, where $\underline{A}(\CC)$ (respectively $\underline{B}(\CC)$) is the complexification of $A$ (resp. $B$). On the left we consider the category of compact Lie groups, while on the right the category of complex reductive (algebraic) groups.

As a particular case, given a compact simple Lie algebra $\fru$, the conjugacy classes of abelian subgroups in $\Aut(\fru)$ are in bijection with the conjugacy classes of quasitori in $\Aut(\fru_\CC)$. This shows that the classification of `weak isomorphism classes of gradings' in $\fru_\CC$ with the property that the support generates the grading group is equivalent to the classification of the conjugacy classes of abelian subgroups in $\Aut(\fru)$ (see \cite[Proposition 1.32]{EK}), and in particular the classification of fine gradings in $\fru_\CC$, up to equivalence, is equivalent to the classification of the conjugacy classes of maximal abelian subgroups in $\Aut(\fru)$ (\cite[Proposition 1.32]{EK}).

With this in mind, the recent results by Jun Yu \cite{Yu1,Yu2,Yu3} classifying the conjugacy classes of a specific set of abelian subgroups of compact simple Lie groups, which contains the maximal abelian subgroups, give the classification, up to equivalence, of the fine gradings on the finite dimensional simple Lie algebras over $\CC$ and hence, by Theorem \ref{th:equiv_classes}, over any algebraically closed field of characteristic $0$. These results give an affirmative answer to \cite[Question 6.65]{EK}: the list of fine gradings on the exceptional simple Lie algebras in \cite[\S 6.6]{EK} is complete.
\end{remark}

%-------------------------
\bigskip

\end{document}